\numberwithin{equation}{section}
\theoremstyle{plain}
\newtheorem{theorem}{Theorem}
\newtheorem{assertion}{Assertion}
\newtheorem{lemma}{Lemma}[section]
\theoremstyle{definition}
\newtheorem{definition}{Definition}
\newtheorem{example}{Example}
\DeclareMathOperator{\convo}{\xrightarrow[]{o}}
\DeclareMathOperator{\convr}{\xrightarrow[]{(ru)}}
\renewcommand{\subsection}{\@startsection{subsection}{1}
{0pt}{3.25ex plus 1ex minus.2ex}{-1em}{\normalfont\normalsize\bf}}
\begin{document}

\title{{\bf On the extension of one-parameter operator semigroups to completions of  Archimedean vector lattices}}
\maketitle
\author{\centering{{Eduard Emelyanov$^{1}$\\ 
\small $1$ Sobolev Institute of Mathematics, Novosibirsk, Russia}

\abstract{Extensions of one-parameter operator semigroups on Archimedean vector lattices to their 
order/ru-completions are studied. Existence and uniqueness of the extension to the ru-completion 
is established in the class of positive semigroups. An extension theorem for positive ru-continuos semigroups 
on vector lattices with property (R) is proved. This theorem allows one to abandon the requirement of ru-completeness 
in various results concerning positive ru-continuous semigroups.}

\bigskip
{\bf{Keywords:}} 
{\rm one parameter operator semigroup, convergence with regulator, (ru)-continuity at zero, (ru)-completion, property (R).}\\

{\bf MSC2020:} {\rm 47D06, 46B42, 47B65}
\large


\section{Introduction and preliminaries}

\hspace{4mm}
Under a semigroup on a vector space $X$ we understand a family $(T_s)_{s\ge 0}$ of linear operators on $X$, 
enjoying the semigroup property $T_{s+t}=T_sT_t$ \ ($s,t\in\mathbb{R}_+$), such that $T_0=I_X$ is 
the identity operator on $X$. Semigroups are in the close relationship with the Cauchy problem
$\frac{d}{dt}f=Af$,\ $f(0)=f_0\in X$, where $f_0$ is an initial condition and $A$ is a densely defined
(in the sense of some convergence) operator on $X$. In general, a semigroup $(T_s)_{s\ge 0}$ on $X$ describes
an evolution of states in the space $X$ which may face a lack of an appropriate topology for 
constructing a necessary in the Cauchy problem operator $A$. Note that the theory of strongly continuous (or $C_0$-) semigroups on Banach spaces
is a well investigated area of functional analysis. In the setting of locally convex spaces, the theory of semigroups is also rather developed (it goes back to 
works of I.~Miyadera, H.~Komatsu, K.~Singbal-Vedak, T.~Komura \cite{Miya1959,Koma1964,S-V1965,Komu1968}, and other authors).

The recent progress in this direction is achieved in works of M.~Kandi\'{c}, M.~Kaplin, M.~Kramar Fijav\v{z}, and J.~Gl\"{u}ck \cite{KK2020,KK-F2020,GK2024}.
It these works, the relatively uniform topology is considered, that need not to be locally convex (for instance on $L^p(\mathbb{R})$, $0<p<1$).
In particular, in \cite{KK-F2020} a generator (constructed by use of the ru-convergence) 
of a semigroup on a relatively uniformly complete vector lattice is investigated. Y.\,A.~Dabboorasad, M.\,A.\,A.~Marabeh, and the author
in Theorem~5 of \cite{DEM2018} established that the ru-convergence in an Archimedean vector lattice is described by a linear topology
if and only if the vector lattice has a strong order unit. Therefore, in the study of a semigroup $(T_s)_{s\ge 0}$ on 
a vector lattice $X$ without a strong order unit, it is necessary to use ru-convergence rather than topological arguments.

It is of a considerable interest to build a generator of the extension of a semigroup which is defined on a dense, in an appropriate sense, sublattice. 
In the case of norm convergence, the extension preserving strong continuity is trivial for semigroups which are bounded in the operator norm 
in a neighborhood of zero. 
In the present paper we consider extendability of a semigroup on an Archimedean vector lattice
to its order/ru- completion and study when such an extension preserves continuity at zero
with respect to the order/ru- convergence.

Theorem \ref{thm-1} states the existence and uniqueness of an extension of a semigroup $(T_s)_{s\ge 0}$ of order continuous
operators from an Archimedean vector lattice $X$ to its order completion $X^\delta$.
Theorem \ref{thm-2} gives the existence and uniqueness of an extension of a positive semigroup on $X$ to its ru-completion $X^r$.
The main result of the paper, Theorem \ref{partial}, establishes extendability of positive semigroups to $X^r$ preserving continuity with
respect to ru-convergence on a vector lattice $X$ with the property $\text{\rm(R)}$.
Theorem \ref{partial} allows dropping the ru-completeness in principal results of work \cite{KK-F2020} 
for a rather broad class of semigroups.

In what follows, vector spaces are supposed to be real and operators linear. By latter $X$ we denote an Archimedean vector lattice, 
by $X^\delta$ the order completion of $X$, and by $X^r$ 
the completion with respect to convergence with regulator. 
For further unexplained symbols, notions, and terminology we refer to \cite{Vul1967,LZ1971,Kus2003,E2007,E2023}.

\section{On the extension of a semigroup to the order completion}\label{s2}

\hspace{4mm}
In order to extend a semigroup $(T_s)_{s\ge 0}$ on $X$ to a semigroup $(T_s^\delta)_{s\ge 0}$ on $X^\delta$,
it is necessary to extend every $T_s$ to an operator $T_s^\delta$ on $X^\delta$. Due to the well known Veksler result \cite{Vek1960},
it suffices to assume that the operators $T_s$ are regular and order continuous. 

The regularity condition on operators $T_s$ is not too restrictive, because the order continuity of
$T_s:X\to X$ is equivalent to the order continuity of $T_s:X\to X^\delta$ 
(as $X$ is a regular sublattice of $X^\delta$). 

Since $X^\delta$ is Dedekind complete, the operator $T_s:X\to X^\delta$ is regular if and only if
$T_s:X\to X^\delta$ is order bounded, which is in turn equivalent to the order boundedness of $T_s:X\to X$.

Furthermore, even the order boundedness of $T_s:X\to X$ can be easily dropped due to the
Abramovich -- Sirotkin theorem on automatic order boundedness of order continuous operators \cite[Theorem 2.1]{AS2005}.
Note that the extensions $T_s^\delta$ are unique and order continuous on $X^\delta$.
Summarizing, we obtain.

\begin{theorem}\label{thm-1}
Every semigroup $(T_s)_{s\ge 0}$ of order continuous operators on $X$ pos\-sesses a unique extension 
to the semigroup $\big(T_s^\delta\big)_{s\ge 0}$ of order continuous operators on $X^\delta$.
\end{theorem}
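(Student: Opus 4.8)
The plan is to build the extension $T_s^\delta$ of each individual operator $T_s$ first, and then verify that the semigroup laws and the order continuity are inherited from the corresponding properties on $X$. For the single-operator extension, the key input is the Veksler theorem cited above: every regular order continuous operator on $X$ (equivalently, every order bounded order continuous operator $X \to X^\delta$) extends uniquely to an order continuous operator on the Dedekind completion $X^\delta$. As noted in the discussion preceding the statement, the hypothesis that $T_s$ is order continuous already forces order boundedness via the Abramovich--Sirotkin theorem, so no separate regularity assumption is needed; thus for each $s \ge 0$ we obtain a unique order continuous $T_s^\delta \colon X^\delta \to X^\delta$ agreeing with $T_s$ on $X$.

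Next I would pin down the explicit form of $T_s^\delta$ that makes the remaining checks mechanical. Since $X$ is order dense in $X^\delta$, every $x \in X^\delta$ satisfies $x = \sup\{u \in X : u \le x\}$ as an order limit (indeed, one can write $x$ as an order limit of an upward-directed net in $X$ from below, or use that $x^+$ and $x^-$ are suprema of elements of $X$). Order continuity of the extension then forces $T_s^\delta x$ to be the order limit of the corresponding net $(T_s u)$; this both re-proves uniqueness and gives a concrete handle. Linearity of $T_s^\delta$ is part of the Veksler conclusion (or follows from order continuity and linearity on the dense sublattice together with Riesz decomposition arguments).

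With the concrete description in hand, the semigroup identities are routine: $T_0^\delta = I_{X^\delta}$ because $T_0^\delta$ is an order continuous extension of $I_X = T_0$, and $I_{X^\delta}$ is one such extension, so uniqueness gives equality. For $s,t \ge 0$, both $T_{s+t}^\delta$ and $T_s^\delta T_t^\delta$ are order continuous operators on $X^\delta$ (the composition of order continuous operators is order continuous) whose restriction to $X$ equals $T_{s+t} = T_s T_t$; again uniqueness of the extension forces $T_{s+t}^\delta = T_s^\delta T_t^\delta$. Order continuity of each $T_s^\delta$ is exactly what Veksler's theorem delivers, and it is preserved under no further operations here.

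The only genuine subtlety — the "main obstacle" — is making sure the chain of equivalences that lets us invoke Veksler is airtight: namely that order continuity of $T_s \colon X \to X$ is equivalent to order continuity of $T_s \colon X \to X^\delta$ (because $X$ sits in $X^\delta$ as a regular, i.e.\ order dense and order closed in the appropriate sense, sublattice), and that order boundedness $X \to X^\delta$ coincides with order boundedness $X \to X$ (using Dedekind completeness of $X^\delta$). These are precisely the points spelled out in the paragraphs before the theorem, so in a written-out proof I would simply cite them together with \cite{Vek1960} and \cite[Theorem~2.1]{AS2005}; everything else is the bookkeeping described above. Uniqueness of the semigroup extension is immediate since uniqueness already holds operator by operator.
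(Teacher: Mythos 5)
Your proposal is correct and follows essentially the same route as the paper: extend each $T_s$ individually via Veksler's theorem (with the Abramovich--Sirotkin theorem supplying order boundedness automatically), then derive the semigroup identities and uniqueness from the uniqueness of the single-operator extensions. The paper leaves the verification of $T_0^\delta=I_{X^\delta}$ and $T_{s+t}^\delta=T_s^\delta T_t^\delta$ implicit, whereas you spell it out, but this is the same argument.
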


\hspace{4mm}
It seems to be important to find supplementary conditions under which
this extension preserves order continuity/ru-continuity of the semigroup $(T_s)_{s\ge 0}$, namely the follo\-wing properties.

\begin{definition}\label{thm-1}
A semigroup $(T_s)_{s\ge 0}$ has the property
\begin{itemize}
\item[\rm\bf oc]\ \
if, for every $x\in X$ and $s\ge 0$, we have $T_{s+h}x\convo T_sx$ as $h\to 0$;
\item[\rm\bf oc$_0$]\ \
if, for every $x\in X$ we have $T_hx\convo x$ as $h\downarrow 0$;
\item[\rm\bf ruc]\ \
if, for every $x\in X$ and $s\ge 0$ we have $T_{s+h}x\convr T_sx$ as $h\to 0$;
\item[\rm\bf ruc$_0$]\ \
if, for every $x\in X$ we have $T_hx\convr x$ as $h\downarrow 0$.
\end{itemize}
\end{definition}

\noindent
It can be easily seen 
$$
   \text{\rm\bf ruc}\Longrightarrow\text{\rm\bf ruc}_0;\ \ \ \text{\rm\bf oc}\Longrightarrow\text{\rm\bf oc}_0;\ \ \ 
   \text{\rm\bf ruc}\Longrightarrow\text{\rm\bf oc};\ \ \ \text{\rm\bf ruc}_0\Longrightarrow\text{\rm\bf oc}_0.
$$
\noindent
We shall return to question on reverse of the first implication in the last section.

\section{On extension of a semigroup to the \text{\rm ru}-completion}\label{s3}

\hspace{4mm}
Consider a semigroup $(T_s)_{s\ge 0}$ on $X$. For its extension to a semigroup $\left(T^{(r)}_s\right)_{s\ge 0}$ on $X^r$,
we need to extend each $T_s$ to an operator $T^{(r)}_s$ on $X^r$. First, we  discuss shortly the structure of $X^r$.
The idea of the construction of \text{\rm ru}-completion goes back to paper \cite{Vek1969} of A.I.~Veksler.
The first detailed presentation of the construction  of \text{\rm ru}-completion is contained in the recent papers 
of S.G.~Gorokhova and the author \cite{E2023,EG2024}. 

\smallskip
A vector sublattice $Z$ of an Archimedean vector lattice $Y$ is called \text{\rm ru}-{\em complete in} $Y$ if, for every \text{\rm ru}--Cauchy 
net $(x_\alpha)$ in $Z$ with a regulator $u\in Y$ there exist $x\in Z$ and $w\in Y$ such that $x_\alpha\convr x(w)$. 
Then $x_\alpha\convr x(u)$ by \cite[Lemma 1]{EG2024}. Moreover, the limit $x$ is unique because $X$ is Archimedean. 
Note that the \text{\rm ru}-completeness is sequential, namely the sublattice $Z$ is \text{\rm ru}-complete in $Y$ 
if and only if, for each \text{\rm ru}-Cauchy sequence $(x_n)$ in $Z$ with regulator $u\in Y$ we have that
$x_n\convr x(u)$ for some $x\in Z$ and $w\in Y$.
By \cite[Lemma 2]{EG2024}, we have

\begin{assertion}\label{FUFCL-lemma-2}
The following conditions are equivalent.
\begin{itemize}
\item[$i)$]\
$X$ is \text{\rm ru}-complete in itself.
\item[$ii)$]\
Each principal order ideal $I_z$ of $X$ is complete in the norm\\
$\|x\|_z:=\inf\{\lambda\in\mathbb{R}:|x|\le\lambda|z|\}$.
\item[$iii)$]\
$I_z$ is \text{\rm ru}-complete in itself for every $z\in X$.
\end{itemize}
\end{assertion}

\noindent
By the brothers Kreins -- Kakutani theorem, each $I_z$ from Assertion \ref{FUFCL-lemma-2}
is lattice isomorphic to $C(K)$ for some Hausdorff compact space $K$.

\begin{definition}\label{X-r}
(Cf., \cite[Definition 1]{EG2024})
An \text{\rm ru}-complete in itself vector lattice $F$ is called an
\text{\rm ru}-{\it completion} of $X$ if there exists a lattice embedding $J: X\to F$ such that, 
for each \text{\rm ru}-complete in itself vector lattice $Y$ and each lattice homomorphism $T:X\to Y$ 
there exists a unique lattice homomorphism $S: F\to Y$ satisfying $S\circ J=T$. 
\end{definition}

\hspace{4mm}
The \text{\rm ru}-completion $X^r$ exists and is unique up to a lattice isomorphism.
It can be represented as the intersection of all \text{\rm ru}-complete in itselves vector sublattices 
of some \text{\rm ru}-complete in itself vector lattice $Y$ containing $X$, e.g., one may take $Y=X^\delta$
(see, \cite[Lemma 1]{E2023} or \cite[Proposition 1]{EG2024}). It follows from \cite[Lemma 4]{EG2024},  

\begin{assertion}\label{asser01}
Let $X$ be an Archimedean vector lattice. The following holds.
\begin{itemize}
\item[$i)$]
$X^r=\bigcup\limits_{\gamma\in{\text{\rm Ord}}}X_\gamma=X_{\omega_1}$, moreover 
\item[$ii)$]
$X_1:=X$, 
\item[$iii)$]
$X_{\beta+1}:=\big\{x\in Y: x_n \convr x(x_1),\ for \ a \ sequence \ (x_n)\ \text{\rm in}\ X_\beta\big\}$, \ and \
\item[$iv)$]
$X_\gamma:=\bigcup\limits_{\beta<\gamma}X_{\beta}$ if $\gamma$ is a limit ordinal.
\end{itemize}
\end{assertion}

Now, we turn to the question of extendability of a semigroup $(T_s)_{s\ge 0}$ on $X$ to a semigroup
$(T_s^r)_{s\ge 0}$ on $X^r$. Note that the question has a positive solution for each semigroup
of lattice homomorphisms by the definition of $X^r$. The next theorem extends this fact to positive semigroups.

\begin{theorem}\label{thm-2}
Every positive semigroup $(T_s)_{s\ge 0}$ on $X$ has a unique extension to a positive semigroup $(T_s^r)_{s\ge 0}$ on $X^r$. 
\end{theorem}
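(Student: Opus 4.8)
The plan is to extend each operator $T_s$ separately to a positive operator on $X^r$ by transfinite recursion along the scale $X_1\subseteq X_2\subseteq\dots\subseteq X_\gamma\subseteq\dots$ of Assertion \ref{asser01}, and then to deduce the semigroup identities and the uniqueness of the whole family from the uniqueness of the extension of a single positive operator. Note that, in contrast with the order-completion case, one cannot first push $T_s$ to $X^\delta$ and restrict, since a positive operator need not be order continuous; the ru-completion has to be built up directly. We realize $X^r$ as a vector sublattice of $X^\delta$, so that every $X_\gamma$ sits between $X=X_1$ and $X^r=X_{\omega_1}$; a one-line induction (the lattice operations are ru-continuous) shows that each $X_\gamma$ is itself a vector sublattice.

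Fix $s\ge0$ and abbreviate $T:=T_s$. I would prove, by transfinite induction on $\gamma\le\omega_1$, that there is a positive linear operator $T_\gamma\colon X_\gamma\to X^r$ with $T_1=T$ and $T_{\gamma'}|_{X_\gamma}=T_\gamma$ whenever $\gamma<\gamma'$. At a limit ordinal one simply sets $T_\gamma=\bigcup_{\beta<\gamma}T_\beta$, which is legitimate by the compatibility already established. The successor step $\gamma=\beta+1$ is the substance of the argument. Given $x\in X_{\beta+1}$, pick a sequence $(x_n)$ in $X_\beta$ and a regulator $u$ with $x_n\convr x\,(u)$; by the construction of the scale we may take $u\in X_\beta\subseteq X^r$. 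Positivity of $T_\beta$ yields $|T_\beta x_n-T_\beta x_m|\le T_\beta|x_n-x_m|\le(\varepsilon_n+\varepsilon_m)T_\beta u$, so $(T_\beta x_n)$ is ru-Cauchy in $X^r$ with regulator $T_\beta u\in X^r$; as $X^r$ is ru-complete in itself, it has an ru-limit $y\in X^r$, and we set $T_{\beta+1}x:=y$. This is independent of the data: if $(x_n')$, $u'$ is a second choice with limit $y'$, then $|T_\beta x_n-T_\beta x_n'|\le T_\beta(|x_n-x|+|x-x_n'|)\le\varepsilon_n T_\beta u+\varepsilon_n'T_\beta u'$ tends to $0$ relatively uniformly, and hence $|y-y'|\le c_n\,T_\beta(u+u')$ for scalars $c_n\downarrow0$, so $y=y'$ because $X^r$ is Archimedean. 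Linearity of $T_{\beta+1}$ is inherited from $T_\beta$ together with the linearity of ru-limits; applying the construction to constant sequences shows $T_{\beta+1}$ extends $T_\beta$; and positivity follows by representing $x\ge0$ through $x_n^+\in X_\beta$ (so $x_n^+\convr x$, $T_\beta x_n^+\ge0$) and using that the positive cone of $X^r$ is stable under ru-limits. Putting $\gamma=\omega_1$ produces the desired positive extension $T_s^r:=T_{\omega_1}$ of $T_s$ to $X^r=X_{\omega_1}$.

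Uniqueness of the single-operator extension follows by the same transfinite pattern: if $S_1,S_2$ are positive operators on $X^r$ agreeing with $T$ on $X$ and agreeing on $X_\beta$, then for $x\in X_{\beta+1}$ with $x_n\convr x\,(u)$, $x_n\in X_\beta$, one has $|S_1x-S_2x|\le|S_1x-S_1x_n|+|S_1x_n-S_2x_n|+|S_2x_n-S_2x|\le\varepsilon_n(S_1u+S_2u)$, the middle term vanishing by the inductive hypothesis, so $S_1x=S_2x$; thus $S_1=S_2$ on $X^r=X_{\omega_1}$. The semigroup statement is now immediate. The identity $I_{X^r}$ is a positive extension of $T_0=I_X$, hence $T_0^r=I_{X^r}$. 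For $s,t\ge0$ the operator $T_s^rT_t^r$ is positive and, since $T_t(X)\subseteq X$, it restricts on $X$ to $T_sT_t=T_{s+t}$; by uniqueness $T_s^rT_t^r=T_{s+t}^r$, so $(T_s^r)_{s\ge0}$ is a positive semigroup on $X^r$ extending $(T_s)_{s\ge0}$. Finally, any positive semigroup $(S_s)_{s\ge0}$ on $X^r$ that extends $(T_s)_{s\ge0}$ has each $S_s$ a positive extension of $T_s$, so $S_s=T_s^r$ by the uniqueness just proved.

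The step I expect to be the main obstacle is the successor stage of the construction: one must make sure that the ru-limit defining $T_{\beta+1}x$ actually lies in $X^r$ rather than only in the ambient $X^\delta$ — which is what keeps the inductive hypothesis $T_\gamma(X_\gamma)\subseteq X^r$ alive — and that it does not depend on the approximating sequence or its regulator. Both points rest on controlling regulators: one needs that a representative of an element of $X_{\beta+1}$ can be chosen with regulator in $X_\beta$ (so that $T_\beta u\in X^r$), and that ru-limits in an Archimedean vector lattice are independent of the regulator, which is \cite[Lemma 1]{EG2024}. The remaining verifications (linearity, positivity, compatibility across stages, the semigroup law) are routine manipulations with positive operators and ru-convergence.
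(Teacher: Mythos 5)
Your proposal is correct and follows essentially the same route as the paper: transfinite recursion along the scale $(X_\gamma)$ of Assertion \ref{asser01}, using positivity to transport the ru-Cauchy estimate, ru-completeness of $X^r$ to produce the limit, the Archimedean property for well-definedness and uniqueness, and uniqueness of single-operator extensions to get the semigroup law. The only cosmetic difference is that you let the intermediate extensions take values in $X^r$ rather than in $X_{\beta+1}$ itself, which spares you the paper's observation (via \cite[Lemma 1]{EG2024}) that the limit actually lies in $X_{\beta+1}$.
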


\begin{proof}
Let $T^{(1)}_s=T_s$ for all $s\ge 0$ and suppose that the extension $\big(T^{(\beta)}_s\big)_{s\ge 0}$ is already constructed. 
In order to obtain the extension$\big(T^{(\beta+1)}_s\big)_{s\ge 0}$ on $X_{\beta+1}$, let $s\ge 0$ and $x\in X_{\beta+1}$.  
Pick a sequence $x_n\convr x(x_1)$ in $X_\beta$. For each $k\in\mathbb{N}$, there exists $n_k$ 
such that $|x_n-x|\le\frac{1}{k}|x_1|$ for $n\ge n_k$. Then $|x_n-x_m|\le\frac{2}{k}|x_1|$ for $n,m\ge n_k$. 
Since $T^{(\beta)}_s\ge 0$ then $T^{(\beta)}_s(x_n-x_m)\in\frac{2}{k}\big[-T^{(\beta)}_s|x_1|,T^{(\beta)}_s|x_1|\big]$ for $n,m\ge n_k$,
and hence the sequence $\big(T^{(\beta)}_s{x_n}\big)$ is \text{\rm ru}-Cauchy in $X^r$ with regulator $T^{(\beta)}_s|x_1|$. 
Since $X^r$ is \text{\rm ru}-complete in itself, there exists $y\in X^r$ such that $T^{(\beta)}_s{x_n}\convr y(w)$ with regulator $w\in X^r$. 
By \cite[Lemma 1]{EG2024}, $T^{(\beta)}_s{x_n}\convr y\big(T^{(\beta)}_s|x_1|\big)$, and hence $y\in X_{\beta+1}$. 
Set $T^{(\beta+1)}_sx:=y$. We omit a routing check of the following two facts.
\begin{itemize}
\item[-]
A vector $T^{(\beta+1)}_sx\in X_{\beta+1}$ does not depend on the choice of a sequence $(x_n)$ in $X_\beta$ such that $x_n\convr x(x_1)$.
\item[-]
$T^{(\beta+1)}_s$ is a positive linear operator on $X_{\beta+1}$.
\end{itemize}

Let $T'^{(\beta+1)}_s$ be another positive linear extension of $T^{(\beta)}_s$ on $X_{\beta+1}$.
Take an arbitrary $x\in X_{\beta+1}$ and find a sequence $(x_n)$ in $X_\beta$ 
and an increasing sequence $(n_k)$ such that $|x_n-x|\le\frac{1}{k}|x_1|$ for $n\ge n_k$. Then 
$$
   |T^{(\beta)}_sx_n-T^{(\beta+1)}_sx|+|T'^{(\beta)}_sx_n-T'^{(\beta+1)}_sx|\le\frac{1}{k}\Big(T^{(\beta)}_s|x_1|+T'^{(\beta)}_s|x_1|\Big)\quad (n\ge n_k).
   \eqno(1)
$$
Since $(x_n)$ is contained in $X_\beta$ then $T^{(\beta)}_sx_n=T'^{(\beta)}_sx_n$ for all $n$. Therefor, (1) implies
$$
   T^{(\beta)}_sx_n\convr T^{(\beta+1)}_sx\Big(T^{(\beta)}_s|x_1|+T'^{(\beta)}_s|x_1|\Big)
$$
and
$$ 
   T'^{(\beta)}_sx_n\convr T'^{(\beta+1)}_sx\Big(T^{(\beta)}_s|x_1|+T'^{(\beta)}_s|x_1|\Big).
$$ 
As $X_{\beta+1}$ is Archimedean, $T^{(\beta+1)}_sx=T'^{(\beta+1)}_sx$.
Thus, each $T^{(\beta)}_s$ has a unique extension on $X_{\beta+1}$.
Let $s_1,s_2\ge 0$. Since the operators $T^{(\beta+1)}_{s_1}T^{(\beta+1)}_{s_1}$ and $T^{(\beta+1)}_{s_1+s_2}$
both extend $T^{(\beta)}_{s_1+s_2}$ from $X_{\beta}$ on $X_{\beta+1}$, then $T^{(\beta+1)}_{s_1}T^{(\beta+1)}_{s_1}=T^{(\beta+1)}_{s_1+s_2}$,
and hence $\big(T^{(\beta+1)}_s\big)_{s\ge 0}$ is a semigroup.

\smallskip
Now, let $\gamma$ be a limit ordinal such that, for each $\beta<\gamma$ the semigroup $\big(T^{(\beta)}_s\big)_{s\ge 0}$ 
is a unique positive extension of the semigroup $(T_s)_{s\ge 0}$ on $X_{\beta}$.
Take an arbitrary $x\in X_\gamma=\bigcup_{\beta<\gamma}X_{\beta}$.
Then $x\in X_\beta$ for some $\beta<\gamma$. Thus, the elements $T^{(\beta)}_sx$ are already defined for $s\ge 0$. 
Let $T^{(\gamma)}_sx:=T^{(\beta)}_sx$ for some $\beta<\gamma$. It is easy to see that
\begin{itemize}
\item[-]
$T^{(\gamma)}_sx$ does not depend on the choice of $\beta<\gamma$;
\item[-]
The mappings $T^{(\gamma)}_s:X_\gamma\to X_\gamma$ are positive linear operators on $X_\gamma$; 
\item[-]
The extension $\big(T^{(\gamma)}_s\big)_{s\ge 0}$ on $X_\gamma$ is unique.
\end{itemize}
Since $X^r=X_{\omega_1}$, the proof is complete.
\end{proof}

\hspace{4mm}
It is still unknown whether or not the condition that the semigroup in Theorem \ref{thm-2} is positive
can be weakened to the order boundedness or to the order continuity of all operators $T_s$ of the semigroup. 
In the second case, Theorem \ref{thm-1} provides a unique extension of $(T_s)_{s\ge 0}$ to the semigroup $(T_s^\delta)_{s\ge 0}$ 
of order continuous (and hence regular) operators on $X^\delta$. Accordingly to \cite[Proposition 1]{EG2024}, 
$X^r$ can be identified with the intersection of all \text{\rm ru}-complete in itself vector sublattices of $X^\delta$.
Remark that, under the next additional assumption
\begin{itemize}
\item[$(*)$]\ 
$T_s^\delta(X^r)\subseteq X^r$ for all $s\ge 0$,
\end{itemize}
an extension of the semigroup $(T_s)_{s\ge 0}$ to $X^r$ is obtained as a restriction of the extension $(T_s^\delta)_{s\ge 0}$, namely
$T_s^\delta|_{X^r}=T_s^r$ for all $s\ge 0$. It is interesting to find conditions under which $(*)$ holds.

\section{On a preserving of ru-continuity extension of a positive semigroup to the ru-completion}\label{s4}

\hspace{4mm}
Here, we are going to prove Theorem \ref{partial} on a preserving of ru-continuity extension,
that is the main result of the paper. Recall the following classical definition from the Vulikh book \cite{Vul1967}.

\begin{definition}\label{R-property}(See, \cite[Definition VI.5.1]{Vul1967})
A vector lattice $X$ satisfies condition \text{\rm(R)} $($shortly, $X\in\text{\rm(R)}$$)$ if, for each sequence $(y_k)$ in $X$ 
there exist $y\in X$ and a sequence $(\lambda_k)$ of positive scalars such that $|\lambda_k y_k|\le y$ for all $k$.
\end{definition}

\noindent
Accordingly to \cite[Definition 70.1]{LZ1971}, the condition \text{\rm(R)} is also called the $\sigma$-property.
By \cite[Theorem 71.5]{LZ1971}, the vector lattice $L^0(\Omega,\Sigma,\mu)$ of classes of $\mu$-almost everywhere equal 
real-valued functions on $\Omega$ satisfies the condition \text{\rm(R)} whenever the measure $\mu$ is $\sigma$-finite.
We need the following characterization of the condition \text{\rm(R)}.

\begin{lemma}\label{R-prop}
Let $X$ be an Archimedean vector lattice. The following conditions are equivalent.
\begin{itemize}
\item[$i)$]\
$X\in\text{\rm(R)}$.
\item[$ii)$]\
Any countably generated order ideal of $X$ is contained in a principal ideal.
\item[$iii)$]\
Any countable family of \text{\rm ru}-convergent sequences in $X$ has a common regulator.
\item[$iv)$]\
Any countable family of \text{\rm ru}-convergent nets in $X$ has a common regulator.
\item[$v)$]\
$X^\delta\in\text{\rm(R)}$.
\item[$vi)$]\
$X^r\in\text{\rm(R)}$.
\item[$vii)$]\
$X_\beta\in\text{\rm(R)}$ for every ordinal $\beta$.
\item[$viii)$]\
$X_\beta\in\text{\rm(R)}$ for some ordinal $\beta$.
\end{itemize}
\end{lemma}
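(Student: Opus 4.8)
The natural strategy is to prove a cycle of implications that routes everything through the basic characterization (i) $\Leftrightarrow$ (ii), since the ideal-theoretic formulation is the most flexible. I would first establish the equivalences among (i), (ii), (iii), (iv) — these are essentially ``finitary'' statements about $X$ itself — and then prove the transfer statements (i) $\Leftrightarrow$ (v), (vi), (vii), (viii) by comparing $X$ with its completions. Concretely, the cycle could be: (i) $\Rightarrow$ (iv) $\Rightarrow$ (iii) $\Rightarrow$ (ii) $\Rightarrow$ (i); then (i) $\Rightarrow$ (vii) $\Rightarrow$ (viii) $\Rightarrow$ (vi) $\Rightarrow$ (i) and, separately, (i) $\Leftrightarrow$ (v).

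For (i) $\Rightarrow$ (iv): given countably many ru-convergent nets $(x^{(k)}_\alpha)\convr x^{(k)}(u_k)$, the sequence $(u_k)$ has, by (R), positive scalars $\lambda_k$ and a $y\in X$ with $|\lambda_k u_k|\le y$; then $y$ is a common regulator (after rescaling the rate of convergence, which is harmless). (iv) $\Rightarrow$ (iii) is trivial since sequences are nets. (iii) $\Rightarrow$ (ii): given a countably generated order ideal $I=I(\{z_k\})$, apply (iii) to the trivial ru-convergent sequences $z_k/n\convr 0(z_k)$ to get a common regulator $w$; then every $z_k\in I_w$, so $I\subseteq I_w$. (ii) $\Rightarrow$ (i): given $(y_k)$, the ideal generated by $\{y_k\}$ lies in some $I_y$, which gives scalars $\lambda_k$ with $|\lambda_k y_k|\le y$. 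So far nothing subtle.

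The transfer part is where I expect the real work. For (i) $\Rightarrow$ (v): a sequence in $X^\delta$ can be dominated by order suprema of countable subsets of $X$ (since $X$ is order dense in $X^\delta$ and $X^\delta$ is Dedekind complete, each element of $X^\delta$ is a supremum of elements of $X^+$); one must squeeze out a countable ``control set'' in $X$, apply (R) in $X$, and push the bound back up to $X^\delta$ — the point being that an order-bounded-in-$X$ family stays order bounded in $X^\delta$. For (v) $\Rightarrow$ (i) one restricts, using that $X$ is a sublattice of $X^\delta$. The chain through $X^r$ and the $X_\beta$ is cleaner: since $X\subseteq X_\beta\subseteq X^r\subseteq X^\delta$ as vector sublattices, (v) $\Rightarrow$ (vi) $\Rightarrow$ (vii) $\Rightarrow$ (viii) follow from restriction once one knows (R) passes to sublattices that are ``large enough'' — but (R) does \emph{not} pass to arbitrary sublattices, so the genuine content is that one can choose the dominating element $y$ inside the smaller lattice. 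The key observation making this work is Assertion~\ref{asser01}: an element of $X_{\beta+1}$ is an ru-limit of a sequence from $X_\beta$ with regulator $x_1\in X$, so countable families in $X^r$ can be ``traced back'' to countable families with regulators in $X$, and then (R) in $X$ supplies the bound. I would prove (viii) $\Rightarrow$ (vi): if $X_\beta\in(R)$, run the ``trace-back'' argument along the transfinite construction to show each $X_{\beta+1}$, and at limit stages each $X_\gamma$, inherits (R), reaching $X_{\omega_1}=X^r$; and (vi) $\Rightarrow$ (i) by restriction, again checking the dominating element can be taken in $X$ (here one uses that a regulator in $X^r$ of a sequence from $X$ can be replaced by one in $X$, via the ``$x_1$-regulator'' phenomenon).

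\textbf{Main obstacle.} The delicate step is the downward transfer, i.e.\ showing that (R) in a completion forces (R) in $X$ with the dominating element chosen \emph{inside} $X$ — equivalently, that the domination witnessed in $X^\delta$ (or $X^r$) can be realized by an element of $X$. The resolution is the structural fact that countable families with a regulator in the completion can be replaced by countable families possessing a regulator already in $X$: for $X^r$ this is immediate from the inductive description in Assertion~\ref{asser01} (regulators at each successor step lie in $X_1=X$), and for $X^\delta$ one uses order density of $X$ together with the fact that an order-interval of $X^\delta$ with endpoints in $X$ is the order closure of the corresponding interval of $X$. Once this replacement is in hand, every implication reduces to applying (R) in $X$ itself, and the cycle closes.
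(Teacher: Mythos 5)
Your architecture is sound, and your treatment of $i)\Leftrightarrow ii)\Leftrightarrow iii)\Leftrightarrow iv)$ is correct (the paper simply cites Proposition~5.2 and Corollary~5.3 of \cite{KK2020} for $ii)\Leftrightarrow iii)\Leftrightarrow iv)$, so your direct arguments are a routine but harmless expansion). You also correctly identify the crux of the transfer statements: condition \text{\rm(R)} does not pass to arbitrary sublattices, and the whole point is to be able to choose the dominating element inside the smaller lattice.

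However, your proposed mechanism for the transfer between $X$ and $X^\delta$ has a genuine gap. For $i)\Rightarrow v)$ you want to dominate a sequence $(y_n)$ in $X^\delta$ by ``order suprema of countable subsets of $X$'' obtained from order density of $X$ from below. This fails twice over: an element of $X^\delta$ is the supremum of the (generally uncountable) set $\{x\in X_+:x\le |y_n|\}$, and there is no reason a countable subset with the same supremum should exist; and even if it did, applying \text{\rm(R)} to the union of these control sets produces a scalar for each control element separately, not a single scalar $\lambda_n$ with $|\lambda_n y_n|\le g$, so the bound does not ``push back up''. The fact that makes everything work --- and which the paper uses in every single transfer step, upward and downward --- is that $X$ is a \emph{majorizing} sublattice of $X^\delta$: every $y\in X^\delta$ satisfies $|y|\le g$ for some $g\in X$. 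With this, $i)\Rightarrow v)$ is immediate (dominate $|y_n|\le g_n\in X$ and apply \text{\rm(R)} to $(g_n)$ in $X$), and each downward implication ($v)\Rightarrow vi)$, $viii)\Rightarrow i)$, and so on) is equally immediate: apply \text{\rm(R)} in the larger lattice and then enlarge the dominating element to one lying in $X$, which is again possible by majorization since $X\subseteq X_\beta\subseteq X^r\subseteq X^\delta$. Your ``trace-back'' through Assertion~\ref{asser01} also rests on a misreading: the regulator $x_1$ in item $iii)$ of that assertion is the first term of the approximating sequence and lies in $X_\beta$, not in $X_1=X$; one could still descend to $X$ by a transfinite induction, but the majorization argument renders this detour unnecessary.
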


\begin{proof}
The equivalentness $i)\Longleftrightarrow ii)$ is trivial, whereas $ii)\Longleftrightarrow iii)\Longleftrightarrow iv)$ are proved
for example in \cite[Proposition~5.2 and Corollary~5.3]{KK2020}.

\smallskip
Let us consider $X$ as a sublattice of $X^r$, and $X^r$ as a sublattice of $X^\delta$.

\smallskip
$i)\Longrightarrow v)$\
Let $(y_n)$ be a sequence of elements of $X^\delta$. Since $X$ is a majorizing sublattice in $X^\delta$, 
there is a sequence $(g_n)$ in $X$ such that $|y_n|\le g_n$ for every $n$.
Since $X\in\text{\rm(R)}$, there exist $g\in X\subseteq X^\delta$ and a sequence $(\lambda_n)$ in $\mathbb{R}\setminus\{0\}$ 
such that $|\lambda_n g_n|\le g$ for all $n$. 
The inequalities $|\lambda_n y_n|\le|\lambda_n g_n|\le g$ hold for every $n$. Therefore, $X^\delta\in\text{\rm(R)}$.

\smallskip
$v)\Longrightarrow vi)$\
Let $(y_n)$ be a sequence in $X^r$. Since $X$ is a majorizing sublattice in $X^\delta$,
it is also majorizing in $X^r$. So, there is a sequence $(g_n)$ in $X\subseteq X^\delta$ such that $|y_n|\le g_n$ for all $n$.
Since $X^\delta\in\text{\rm(R)}$, there exist $g\in X^\delta$ and a sequence $(\lambda_n)$ in $\mathbb{R}\setminus\{0\}$ 
such that $|\lambda_n g_n|\le g$ for every $n$. Since $X$ is a majorizing sublattice in $X^\delta$, we may assume $g\in X\subseteq X^r$.
Consequently, $X^r\in\text{\rm(R)}$.

\smallskip
$vi)\Longrightarrow vii)$\ 
This implication follows from the item $i)$ of Assertion \ref{asser01}.

\smallskip
$vii)\Longrightarrow viii)$\ 
It is trivial.

\smallskip
$viii)\Longrightarrow i)$\
Let $X_{\beta_0}\in\text{\rm(R)}$. Take a sequence $(y_n)$ in $X\subseteq X_{\beta_0}$.
Since $X_{\beta_0}\in\text{\rm(R)}$, there exist $g\in X_{\beta_0}$ and a sequence $(\lambda_n)$ in $\mathbb{R}\setminus\{0\}$ 
such that $|\lambda_n y_n|\le g$ for all $n$. Since $X$ is a majorizing sublattice in $X^r$, it is also majorizing in $X_{\beta_0}$.
Thus, we may assume $g\in X$. Consequently, $X\in\text{\rm(R)}$.
\end{proof}

\noindent
By \cite[Example 5.5]{KK2020}, the vector lattice $\text{\rm Lip}(\mathbb{R})$ is not  
\text{\rm ru}-complete in itself and has a strong order unit. Therefore, it satisfies the condition \text{\rm(R)},
due to Lemma \ref{R-prop}. Consider one more example of a vector lattice satisfying the condition \text{\rm(R)}
which is not \text{\rm ru}-complete in itself.

\begin{example}\label{R-example1}
Let $X$ be a vector lattice of continuous real-valued functions on $\mathbb{R}$ such that
each of function in $X$ takes only finitely many values and has at most countable support.
Clearly, $X$ has no strong order unit. Take a sequence $(f_k)$ in $X$ and let $\Omega$ be the union
of supports of all $f_k$ of the sequence. It is clear that the indicator function ${\bold 1}_\Omega$ of $\Omega$ 
belongs to $X$ and there exists a sequence $(\lambda_k)$ of non-zero reals such that $|\lambda_k f_k|\le{\bold 1}_\Omega$ for all $k$. 
Therefore, $X\in\text{\rm(R)}$. It is also easy to see that $X$ is not \text{\rm ru}-complete in itself.
\end{example}

The following extension theorem is the main result of the paper.

\begin{theorem}\label{partial}
Let $(T_s)_{s\ge 0}$ be a positive semigroups on an Archimedean vector lattice $X\in\text{\rm(R)}$
such that $(T_s)_{s\ge 0}\in\text{\rm\bf ruc}_0$. Then $(T^r_s)_{s\ge 0}\in\text{\rm\bf ruc}$.
\end{theorem}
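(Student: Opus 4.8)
The plan is to extend the semigroup to $X^r$ first, then verify $\text{\rm\bf ruc}_0$ for the extension by transfinite induction, and finally bootstrap from $\text{\rm\bf ruc}_0$ to $\text{\rm\bf ruc}$ by a ``forward propagation'' argument that exploits positivity. By Theorem \ref{thm-2} we already have the unique positive extension $\big(T^r_s\big)_{s\ge 0}$ on $X^r$, and $X^r\in\text{\rm(R)}$ by Lemma \ref{R-prop}, so only the continuity at zero and the two-sided continuity remain to be shown.

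\textbf{Step 1: $\big(T^r_s\big)_{s\ge0}\in\text{\rm\bf ruc}_0$.} I would prove, by transfinite induction along the chain $X=X_1\subseteq X_2\subseteq\dots$, $X^r=X_{\omega_1}$, of Assertion \ref{asser01}, that $T^r_hx\convr x$ as $h\downarrow 0$ for every $x\in X_\beta$; the base case is the hypothesis and limit ordinals are immediate. For the successor step, fix $y\in X_{\beta+1}$ and a sequence $(x_n)$ in $X_\beta$ with $x_n\convr y(x_1)$, pick $(n_k)$ with $|x_n-y|\le\tfrac1k|x_1|$ for $n\ge n_k$, and write
$$
   T^r_hy-y=(T^r_hy-T^r_hx_n)+(T^r_hx_n-x_n)+(x_n-y).
$$
Since $T^r_h\ge 0$ we have $|T^r_hy-T^r_hx_n|\le T^r_h|y-x_n|\le\tfrac1k T^r_h|x_1|$ for $n\ge n_k$, and $\{T^r_h|x_1|:0<h\le\delta\}$ is order bounded because the inductive hypothesis applies to $|x_1|\in X_\beta$; the last summand is $\le\tfrac1k|x_1|$; and for each fixed $n$ the middle summand is an $\text{\rm ru}$-null net in $h$. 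Here property (R) is decisive: by Lemma \ref{R-prop} the countably many $\text{\rm ru}$-null nets $(T^r_hx_n-x_n)_{h\downarrow0}$, $n\in\mathbb N$, admit a common regulator $\rho$, so choosing first $k$ large, then $n=n_k$, then $h$ small yields $T^r_hy\convr y$ with a single regulator assembled from $\rho$, $|x_1|$, and the order bound for $\{T^r_h|x_1|\}$ (and a routine check shows this limit does not depend on the chosen sequence).

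\textbf{Step 2: $\text{\rm\bf ruc}_0\Rightarrow\text{\rm\bf ruc}$ on $X^r$.} This step uses only positivity. Right continuity is immediate from $T^r_{s+h}y-T^r_sy=T^r_s(T^r_hy-y)$, whence $|T^r_{s+h}y-T^r_sy|\le T^r_s|T^r_hy-y|\convr 0$ as $h\downarrow0$ by Step 1. For left continuity, fix $y$, let $w\ge 0$ be a regulator for $T^r_hy\convr y$, and apply $\text{\rm\bf ruc}_0$ to $w$ to obtain $\delta_0>0$ and $M\ge0$ with $T^r_tw\le M$ for $0\le t\le\delta_0$. Then for $0<s\le\delta_0$ and $0<h<s$,
$$
   |T^r_sy-T^r_{s-h}y|=|T^r_{s-h}(T^r_hy-y)|\le T^r_{s-h}|T^r_hy-y|\le\varepsilon\,T^r_{s-h}w\le\varepsilon M
$$
once $h$ is small enough, so $T^r_{s-h}y\convr T^r_sy$ with regulator $M$. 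Thus left continuity holds on $(0,\delta_0]$; to reach every $s>0$, propagate it forward via $T^r_{s+r}y-T^r_{s+r-h}y=T^r_r(T^r_sy-T^r_{s-h}y)$, which shows that left continuity at $s$ forces it at $s+r$ for all $r\ge0$. Combined with right continuity this gives $\big(T^r_s\big)_{s\ge0}\in\text{\rm\bf ruc}$.

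\textbf{Main obstacle.} The delicate part is Step 1: keeping the transfinite recursion coherent and, above all, manufacturing the \emph{single} regulator for $T^r_hy\convr y$ at successor stages --- this is precisely where $X\in\text{\rm(R)}$ is indispensable, through the common-regulator characterisation of Lemma \ref{R-prop}. By contrast Step 2 is essentially formal once one observes that, under the semigroup law, left $\text{\rm ru}$-continuity propagates \emph{forward} rather than backward, so it is enough to establish it on an arbitrarily short initial interval, where $\text{\rm\bf ruc}_0$ already supplies the required order bound.
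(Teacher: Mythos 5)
Your proposal is correct, but it takes a genuinely different route from the paper's. Both arguments share the same skeleton --- extend the semigroup via Theorem \ref{thm-2} and run a transfinite induction along the chain $X_1\subseteq X_2\subseteq\dots\subseteq X_{\omega_1}=X^r$ of Assertion \ref{asser01} --- but the invariant carried through the induction differs. The paper carries the full property $\text{\rm\bf ruc}$ at every stage: it converts $X\in\text{\rm(R)}$ into property \text{\rm(D)} for all $X_\beta$ via \cite[Theorem 5.4]{KK2020}, gets the base case from \cite[Proposition 3.5]{KK2020}, obtains order boundedness of the orbits $\{T^{(\beta+1)}_tx:0\le t\le s\}$ from majorization together with \cite[Proposition 3.4]{KK2020}, and then delegates the entire successor step to the density theorem \cite[Theorem 5.7]{KK2020} applied with $D=X_\beta$. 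You instead carry only $\text{\rm\bf ruc}_0$ through the induction, proving the successor step by hand with a three-term $\varepsilon/3$ decomposition whose uniformity in $n$ comes from the common-regulator characterization of \text{\rm(R)} in item $iv)$ of Lemma \ref{R-prop}; this is a correct, self-contained substitute for the citation of \cite[Theorem 5.7]{KK2020}, and it needs order boundedness of $\{T^r_h|x_1|\}$ only near $h=0$, which the inductive hypothesis supplies directly. You then upgrade $\text{\rm\bf ruc}_0$ to $\text{\rm\bf ruc}$ once, at the end, using only positivity and the semigroup law: right continuity from $T^r_{s+h}y-T^r_sy=T^r_s(T^r_hy-y)$; left continuity on an initial interval $(0,\delta_0]$, where applying $\text{\rm\bf ruc}_0$ to the regulator $w$ itself yields the uniform order bound $T^r_tw\le M$; and forward propagation via $T^r_{s+r}y-T^r_{s+r-h}y=T^r_r\big(T^r_sy-T^r_{s-h}y\big)$ to reach every $s>0$. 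What the paper's route buys is brevity, at the cost of importing the property-\text{\rm(D)} machinery of \cite{KK2020}; what yours buys is self-containedness and a sharper factorization of where each hypothesis enters: property \text{\rm(R)} is used only in your Step 1, while your Step 2 shows that for positive semigroups $\text{\rm\bf ruc}_0$ already implies $\text{\rm\bf ruc}$ on an arbitrary Archimedean vector lattice --- precisely the reverse implication flagged at the end of Section \ref{s2}, which the paper instead obtains under property \text{\rm(D)} through \cite[Proposition 3.5]{KK2020}.
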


\begin{proof}
We may assume that $X$ is a sublattice of $X^r$. By Theorem \ref{thm-2}, the positive semigroup $(T_s^r)_{s\ge 0}$ is the unique 
extension of $(T_s)_{s\ge 0}$ to $X^r$. It follows from \cite[Theorem 5.4]{KK2020} 
and Lemma \ref{R-prop} that $X_\beta$ has the property \text{\rm(D)} (see Definition 3.5 in \cite{KK2020}) for every ordinal $\beta$.

We arrange the rest of the proof by use of transinite induction.
By \cite[Proposition 3.5]{KK2020}, $\big(T^{(1)}_s\big)_{s\ge 0}=(T_s)_{s\ge 0}\in\text{\rm\bf ruc}$ on $X_1=X$. 
Assume $\big(T^{(\beta)}_s\big)_{s\ge 0}$ has the property \text{\rm\bf ruc} on $X_\beta$ for all $\beta<\gamma$.

\smallskip
{\bf I}.\
Consider the case of a non-limit ordinal, $\gamma=\beta+1$. 
Take arbitrary $s\ge 0$ and $x\in X_{\beta+1}$. Since $X$ is a majorizing sublattice in $X_{\beta+1}$,
there exists $y\in X$, $|x|\le y$. As $(T_s)_{s\ge 0}\in\text{\rm\bf ruc}_0$ on $X$,
\cite[Proposition 3.4]{KK2020} implies $\{|T_ty|: 0\le t\le s\}\subseteq[0,z]$ for some $z\in X_+$. Then
$$
   \big|T^{(\beta+1)}_tx\big|\le T^{(\beta+1)}_t|x|\le T^{(\beta+1)}_ty=T_ty\le z\in X\subseteq X_{\beta+1} \ \ \ \ \ \ (0\le t\le s).
   \eqno(2)
$$
Set $D=X_\beta$ in \cite[Theorem 5.7]{KK2020}. Since $(T^{(\beta)}_s)_{s\ge 0}\in\text{\rm\bf ruc}_0$ then
$T^{(\beta+1)}_hg=T^{(\beta)}_hg\convr g$ as $h\downarrow 0$ for every $g\in X_\beta=D$.
In view of (2), the set $\big\{T^{(\beta+1)}_tx: 0\le t\le s\big\}$ is order bounded in $X_{\beta+1}$
for every $s\ge 0$ and $x\in X_{\beta+1}$. Applying \cite[Theorem 5.7]{KK2020}, we conclude that
$\big(T^{(\beta+1)}_s\big)_{s\ge 0}\in\text{\rm\bf ruc}$.

\smallskip
{\bf II}.\
Now, let $\gamma$ be a limit ordinal. Take an arbitrary $x\in X_\gamma$.
Then $x\in X_\beta$ for some $\beta<\gamma$. Since $\big(T^{(\beta)}_s\big)_{s\ge 0}\in\text{\rm\bf ruc}$
then $T^{(\gamma)}_hx=T^{(\beta)}_hx\convr x$ for $h\downarrow 0$, and hence   
$\big(T^{(\gamma)}_s\big)_{s\ge 0}\in\text{\rm\bf ruc}$.

\smallskip
Because $X^r=X_{\omega_1}$ by Assertion \ref{asser01}, the proof is complete.
\end{proof}

\smallskip
In the paper \cite{KK-F2020} the generator of a positive semigroup with property $\text{\rm\bf ruc}$ on
an \text{\rm ru}-complete in itself vector lattice is constructed. Assertion \ref{FUFCL-lemma-2} 
allows to simplify the arguments from \cite{KK-F2020}, by restricting the consideration to 
continuous functions on a compact Hausdorff space, whereas Theorem \ref{partial} 
is letting to avoid the \text{\rm ru}-completeness of underlying lattices in the rather broad class of 
semigroups on vector lattices satisfying the condition $\text{\rm(R)}$.

\end{document}